\documentclass[english]{amsart}
\usepackage[T1]{fontenc}
\usepackage[latin9]{inputenc}
\usepackage{amstext}
\usepackage{amsthm}
\usepackage{amssymb}

\makeatletter
\numberwithin{equation}{section}
\numberwithin{figure}{section}
\theoremstyle{plain}
\newtheorem{thm}{\protect\theoremname}
\theoremstyle{plain}
\newtheorem{cor}[thm]{\protect\corollaryname}
\theoremstyle{remark}
\newtheorem{rem}[thm]{\protect\remarkname}

\makeatother

\usepackage{babel}
\providecommand{\corollaryname}{Corollary}
\providecommand{\remarkname}{Remark}
\providecommand{\theoremname}{Theorem}

\begin{document}

\title{Summing Lambert Series in Euler's q-Exponential Functions}

\author{Ruiming Zhang}

\curraddr{College of Science\\
Northwest A \& F University\\
Yangling, Shaanxi 712100\\
P. R. China.}

\email{ruimingzhang@yahoo.com}

\keywords{Lambert series; arithmetic functions; Euler's q-exponentials.}
\begin{abstract}
In the work we shall present formulas to sum Lambert series using
Euler's q-exponential functions, and several Lambert series associated
with well-known arithmetic functions are given as examples. These
functions are: the M\"{o}bius $\mu(n)$, the Euler's totient $\varphi(n)$,
Jordan's totient $J_{k}(n)$, von Mangoldt $\Lambda(n)$, divisor
function $\sigma_{s}(n)$, the Ramanujan's sum $c_{q}(n)$ , and sum
of square functions $r_{2}(n),r_{4}(n),r_{8}(n)$.
\end{abstract}

\thanks{The work is supported by the National Natural Science Foundation
of China grants No. 11371294 and No. 11771355. }

\subjclass[2000]{33D05; 33D80; 11A25; 11K65.}
\maketitle

\section{Introduction}

Lambert series
\[
\sum_{n=1}^{\infty}\frac{a_{n}q^{n}}{1-q^{n}}
\]
is a very important class of $q$-series that often appears in the
studies of arithmetic functions, summability methods and modular forms.
Many of them are also generating functions for well-known arithmetic
functions $a_{n}$, for example,
\[
\sum_{n=1}^{\infty}\frac{\mu(n)q^{n}}{1-q^{n}}=q,\quad\sum_{n=1}^{\infty}\frac{\varphi(n)q^{n}}{1-q^{n}}=\frac{q}{(1-q)^{2}},
\]
where $\mu(n)$ and $\varphi(n)$ are the M\"{o}bius function and
Euler totient function respectively. In this work we shall evaluate
several Lambert series expansions associated various arithmetic functions
in terms of Euler q-exponential functions, their forms are reminiscent
of the formulas \cite{Apostol,Sandor2}

\[
\prod_{n=1}^{\infty}\left(1-q^{n}\right)^{\varphi(n)/n}=\exp\left(-\frac{q}{1-q}\right)
\]
and 
\[
\prod_{n=0}^{\infty}\left(\frac{1+q^{2n+1}}{1-q^{2n+1}}\right)^{\varphi(2n+1)/(2n+1)}=\exp\left(\frac{2q}{1-q^{2}}\right).
\]
In the following we shall use the following common notations \cite{Andrews,DLMF,Gasper,Ismail}
\begin{equation}
\left(z;q\right)_{\infty}=\prod_{n=0}^{\infty}\left(1-zq^{n}\right),\quad\left(z_{1},\dots z_{m};q\right)_{\infty}=\prod_{k=1}^{m}\left(z_{k};q\right)_{\infty},\label{eq:1.1}
\end{equation}
where $\left|q\right|<1$, $m\in\mathbb{N}$ and $z,z_{1},\dots,z_{m}\in\mathbb{C}$.
The q-shifted factorials can be defined by
\begin{equation}
\left(z;q\right)_{n}=\frac{\left(z;q\right)_{\infty}}{\left(zq^{n};q\right)_{\infty}},\ \left(z_{1},\dots z_{m};q\right)_{n}=\prod_{k=1}^{m}\left(z_{k};q\right)_{n},\quad n\in\mathbb{C}.\label{eq:1.2}
\end{equation}
From the q-binomial theorem, \cite{Andrews,DLMF,Gasper,Ismail}
\begin{equation}
\frac{(az;q)_{\infty}}{(z;q)_{\infty}}=\sum_{n=0}^{\infty}\frac{(a;q)_{n}}{(q;q)_{n}}z^{n},\quad|z|<1,\label{eq:1.3}
\end{equation}
two special identities can be derived quickly, the first one is to
let $a=0$ in (\ref{eq:1.3}),
\[
e_{q}(z)=\frac{1}{(z;q)_{\infty}}=\sum_{n=0}^{\infty}\frac{z^{n}}{(q;q)_{n}},\quad|z|<1,
\]
while the second is obtained by replacing $z$ by $-z/a$ (\ref{eq:1.3}),
then letting $a\to\infty$,
\[
E_{q}(z)=(-z;q)_{\infty}=\sum_{n=0}^{\infty}\frac{q^{\binom{n}{2}}}{(q;q)_{n}}z^{n},\quad z\in\mathbb{C}.
\]
The two functions $e_{q}(z)$ and $E_{q}(z)$ are called Euler's q-exponential
functions, they are two among infinitely many q-analogues of the classical
exponential function $e^{z}$. 

Many fundamental functions in the theory of special functions can
be defined using Euler q-exponentials. For example, the q-Gamma function,
\cite{Andrews,DLMF,Gasper,Ismail}
\[
\frac{1}{\Gamma_{q}(z)}=\frac{\left(q^{z};q\right)_{\infty}}{\left(q;q\right)_{\infty}}\left(1-q\right)^{z-1},\quad z\in\mathbb{C}.
\]
 and the basic hypergeometric series $_{r}\phi_{s}$, 
\[
\begin{aligned}_{r}\phi_{s}\left(\begin{array}{cc}
\begin{array}{c}
a_{1},\dots,a_{r}\\
b_{1},\dots b_{s}
\end{array} & \bigg|q,z\end{array}\right) & =\sum_{n=-\infty}^{\infty}\frac{\left(a_{1},\dots,a_{r};q\right)_{n}z^{n}}{\left(q,b_{1},\dots,b_{s};q\right)_{n}}\left(-q^{\frac{n-1}{2}}\right)^{n\left(s+1-r\right)}.\end{aligned}
\]
It is not an exaggeration to say that Euler's q-exponential functions
are at the heart of q-special functions. It is worth noticing that
our definitions for the $q$-shifted factorial $(z;q)_{n}$ in (\ref{eq:1.2})
is based on the definitions of q-exponentials, thus the index in $(z;q)_{n}$
can be any complex numbers. This promotes the view that almost all
the the summations in q-series are actually over the full group $\mathbb{Z}$,
which is a nice analytical feature of many q-special functions.

Since all the $\theta$-functions are related to each other, for our
convenience we examine a simpler one, \cite{Andrews,DLMF,Gasper,Ismail}
\[
\sum_{n=-\infty}^{\infty}q^{n^{2}/2}\left(-z\right)^{n}=\left(q,q^{1/2}z,q^{1/2}/z;q\right)_{\infty},\quad|q|<1,\ z\neq0.
\]
Clearly, it is nothing but a product of three q-exponentials. Since
the identity,
\[
\frac{\sum_{n=-\infty}^{\infty}(-1)^{n}q^{n^{2}/2+nw}}{(1-q)(q;q)_{\infty}^{2}}=\frac{1}{\Gamma_{q}\left(\frac{1}{2}-w\right)\Gamma_{q}\left(\frac{1}{2}+w\right)},\quad w\in\mathbb{C}
\]
is an obvious analogue to the reflection formula for the Euler $\Gamma$
function $\Gamma(w)$. Consequently, the $\theta$-functions are just
the q-analogues of trigonometric functions. The Euler q-exponential
functions are also closely related to the Dedekind $\eta(\tau)$ function
through 
\begin{equation}
\eta(\tau)=q^{1/24}\prod_{n=1}^{\infty}\left(1-q^{n}\right)=q^{1/24}\left(q;q\right)_{\infty},\label{eq:1.4}
\end{equation}
where $q=e^{2\pi i\tau},\ \Im(\tau)>0$. Since $\eta(\tau)$ can be
used to construct other modular forms, for example, the modular discriminant
of Weierstrass, 
\begin{equation}
\Delta(\tau)=\left(2\pi\right)^{12}q\left(q;q\right)_{\infty}^{24},\label{eq:1.5}
\end{equation}
thus it is far-fetched to speculate that the Euler q-exponential functions
is also behind some important properties of modular forms. 

\section{Main Results}
\begin{thm}
\label{thm:2.1}Let $f(n),g(n)$ be an arithmetic functions such that
\begin{equation}
f(n)=\sum_{d\vert n}g(d),\quad\sum_{n=1}^{\infty}\frac{\left|g(n)\right|}{n}c^{n}<\infty\label{eq:2.1}
\end{equation}
for all $0<c<1$. If $\Re(z)>0,\,0<q<1$, then, 
\begin{equation}
\prod_{n=1}^{\infty}\left(q^{nz};q^{n}\right)_{\infty}^{g(n)/n}=\exp\left(-\sum_{n=1}^{\infty}\frac{f(n)}{n}\frac{q^{nz}}{1-q^{n}}\right).\label{eq:2.2}
\end{equation}
 and
\begin{equation}
\prod_{n=1}^{\infty}\left(\frac{\left(q^{n(z+1)};q^{2n}\right)_{\infty}}{\left(q^{nz};q^{2n}\right)_{\infty}}\right)^{g(n)/n}=\exp\left(\sum_{n=1}^{\infty}\frac{f(n)}{n}\frac{q^{nz}}{1+q^{n}}\right).\label{eq:2.3}
\end{equation}
\end{thm}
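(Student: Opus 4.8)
The plan is to prove both identities by taking logarithms, which converts the infinite products into infinite sums, and then interchanging the order of summation so that the Dirichlet-convolution relation $f(n)=\sum_{d\vert n}g(d)$ can be applied. Let me sketch this.

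Let me start with the first identity \eqref{eq:2.2}. I would take the logarithm of the left-hand side, giving
$$
\log\prod_{n=1}^{\infty}\left(q^{nz};q^{n}\right)_{\infty}^{g(n)/n}=\sum_{n=1}^{\infty}\frac{g(n)}{n}\log\left(q^{nz};q^{n}\right)_{\infty}.
$$
Now I expand the inner logarithm. Since $\left(q^{nz};q^{n}\right)_{\infty}=\prod_{k=0}^{\infty}\left(1-q^{nz+nk}\right)$, taking logarithms and using the Taylor series $\log(1-x)=-\sum_{m=1}^{\infty}x^{m}/m$ gives a double sum over $k$ and $m$; summing the geometric series in $k$ first yields
$$
\log\left(q^{nz};q^{n}\right)_{\infty}=-\sum_{m=1}^{\infty}\frac{1}{m}\frac{q^{nzm}}{1-q^{nm}}.
$$

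The next step is the crucial rearrangement. Substituting this back produces a double sum over $n$ and $m$. The idea is to set $N=nm$ and collect terms: the factor $g(n)/n$ paired with $1/m$ becomes $g(n)/(nm)=g(n)/N$, and summing $g(n)$ over all divisors $n$ of $N$ (with $m=N/n$) invokes $\sum_{n\vert N}g(n)=f(N)$. Carried out carefully, this gives
$$
\sum_{n=1}^{\infty}\frac{g(n)}{n}\log\left(q^{nz};q^{n}\right)_{\infty}=-\sum_{N=1}^{\infty}\frac{f(N)}{N}\frac{q^{Nz}}{1-q^{N}},
$$
which is exactly the logarithm of the right-hand side of \eqref{eq:2.2}. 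Exponentiating completes the proof of the first identity. For the second identity \eqref{eq:2.3}, I would apply the same procedure to the ratio inside the product; the logarithm of $\left(q^{n(z+1)};q^{2n}\right)_{\infty}/\left(q^{nz};q^{2n}\right)_{\infty}$ should collapse, via the same geometric-series manipulation, to a sum involving $q^{nz}/(1+q^{n})$ rather than $q^{nz}/(1-q^{n})$, the alternating sign arising because the base is $q^{2n}$ and the two theta-like factors differ by the shift $z\mapsto z+1$.

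The main obstacle, and the place where the hypotheses in \eqref{eq:2.1} earn their keep, is justifying the interchange of the order of summation (and the termwise logarithm of the infinite product). I expect the condition $\sum_{n=1}^{\infty}\frac{\left|g(n)\right|}{n}c^{n}<\infty$ for all $0<c<1$, together with $\Re(z)>0$ and $0<q<1$, to guarantee absolute convergence of the relevant double series: one bounds $\left|\log\left(q^{nz};q^{n}\right)_{\infty}\right|$ by a constant times $q^{nz}/(1-q^{n})$, and the weighted sum of these against $\left|g(n)\right|/n$ converges by the hypothesis with $c$ chosen appropriately (e.g.\ $c=q^{\Re(z)}$ or a nearby value). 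Once absolute convergence is in hand, Fubini's theorem licenses the rearrangement and the Dirichlet-convolution step is purely formal. I would therefore devote the bulk of the rigorous argument to establishing these convergence estimates, treating the algebraic collapse via $f(N)=\sum_{n\vert N}g(n)$ as the easy, bookkeeping part.
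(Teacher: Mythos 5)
Your proposal is correct and follows essentially the same route as the paper: take logarithms, expand via $-\log(1-x)=\sum_{m\ge1}x^{m}/m$, justify the rearrangement of the resulting multiple sum by the hypothesis $\sum_{n}\frac{|g(n)|}{n}c^{n}<\infty$ together with $\Re(z)>0$, and collect the divisor sum $\sum_{n\vert N}g(n)=f(N)$. The only cosmetic differences are that you sum the geometric series in the product index before invoking the Dirichlet convolution (the paper does it after), and you obtain \eqref{eq:2.3} by expanding the ratio directly rather than via the factorization $\left(q^{nz};q^{n}\right)_{\infty}=\left(q^{nz};q^{2n}\right)_{\infty}\left(q^{n(z+1)};q^{2n}\right)_{\infty}$ and the identity $\frac{2}{1-q^{2m}}-\frac{1}{1-q^{m}}=\frac{1}{1+q^{m}}$.
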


\begin{proof}
Formula (\ref{eq:2.2}) is equivalent to 
\begin{equation}
-\sum_{n=1}^{\infty}\frac{g(n)}{n}\log\left(q^{nz};q^{n}\right)_{\infty}=\sum_{n=1}^{\infty}\frac{f(n)}{n}\frac{q^{nz}}{1-q^{n}}.\label{eq:2.4}
\end{equation}
 Since $0<q<1$ and $\Re(z)>0$, and 
\[
\sum_{n=1}^{\infty}\frac{\left|g(n)\right|}{n}c^{n}<\infty,\quad0<c<1
\]
 imply that
\[
0<-\sum_{n=1}^{\infty}\frac{\left|g(n)\right|}{n}\sum_{j=0}^{\infty}\log\left(1-c^{n\Re(z)+jn}\right)<\infty.
\]
Then by
\[
-\log(1-x)=\sum_{n=1}^{\infty}\frac{x^{n}}{n},\quad0<x<1,
\]
and Fubini's theorem we have
\begin{align*}
 & -\sum_{n=1}^{\infty}\frac{g(n)}{n}\log\left(q^{nz};q^{n}\right)_{\infty}=-\sum_{n=1}^{\infty}\frac{g(n)}{n}\sum_{j=0}^{\infty}\log\left(1-q^{nz+jn}\right)\\
 & =\sum_{n=1}^{\infty}\frac{g(n)}{n}\sum_{j=0}^{\infty}\sum_{k=1}^{\infty}\frac{q^{k(nz+jn)}}{k}=\sum_{j=0}^{\infty}\sum_{n=1}^{\infty}\sum_{k=1}^{\infty}\frac{g(n)}{kn}q^{nk(z+j)}=\sum_{j=0}^{\infty}\sum_{m=1}^{\infty}\left(\sum_{d\vert m}g(m)\right)\frac{q^{m(z+j)}}{m}\\
 & =\sum_{j=0}^{\infty}\sum_{m=1}^{\infty}\frac{f(m)}{m}q^{mz+mj}=\sum_{m=1}^{\infty}\frac{f(m)}{m}\frac{q^{mz}}{1-q^{m}},
\end{align*}
which is (\ref{eq:2.4}). Since
\[
\left(q^{nz};q^{n}\right)_{\infty}=\left(q^{nz};q^{2n}\right)_{\infty}\left(q^{n(z+1)};q^{2n}\right)_{\infty},
\]
then on one hand we have
\begin{align*}
 & -\sum_{n=1}^{\infty}\frac{2g(n)}{n}\log\left(q^{nz};q^{2n}\right)_{\infty}+\sum_{n=1}^{\infty}\frac{g(n)}{n}\log\left(q^{nz};q^{n}\right)_{\infty}\\
 & =\sum_{n=1}^{\infty}\frac{g(n)}{n}\left(\log\left(\frac{\left(q^{nz};q^{2n}\right)_{\infty}\left(q^{n(z+1)};q^{2n}\right)_{\infty}}{\left(q^{nz};q^{2n}\right)_{\infty}^{2}}\right)\right)=\sum_{n=1}^{\infty}\frac{g(n)}{n}\left(\log\left(\frac{\left(q^{n(z+1)};q^{2n}\right)_{\infty}}{\left(q^{nz};q^{2n}\right)_{\infty}}\right)\right).
\end{align*}
On the other hand, by (\ref{eq:2.4}) we obtain
\begin{align*}
 & -\sum_{n=1}^{\infty}\frac{2g(n)}{n}\log\left(q^{nz};q^{2n}\right)_{\infty}+\sum_{n=1}^{\infty}\frac{g(n)}{n}\log\left(q^{nz};q^{n}\right)_{\infty}\\
 & =\sum_{n=1}^{\infty}\frac{f(m)}{m}\left(\frac{2q^{mz}}{1-q^{2m}}-\frac{q^{mz}}{1-q^{m}}\right)=\sum_{n=1}^{\infty}\frac{f(m)}{m}\frac{q^{mz}}{1+q^{m}},
\end{align*}
hence,
\[
\sum_{n=1}^{\infty}\frac{g(n)}{n}\left(\log\left(\frac{\left(q^{n(z+1)};q^{2n}\right)_{\infty}}{\left(q^{nz};q^{2n}\right)_{\infty}}\right)\right)=\sum_{n=1}^{\infty}\frac{f(m)}{m}\frac{q^{mz}}{1+q^{m}},
\]
which is equivalent to (\ref{eq:2.3}).
\end{proof}
\begin{cor}
\label{cor:2.2} If $f(n),g(n),q,z$ as defined in Theorem \ref{thm:2.1},
let us define 
\begin{equation}
h(n)=\sum_{d\vert n}\frac{\mu(d)}{d}f\left(\frac{n}{d}\right),\label{eq:2.5}
\end{equation}
 where $\mu(d)$ is the M\"{o}bius function. Then,
\begin{equation}
nh(n)=\sum_{d\vert n}dg(d)\varphi\left(\frac{n}{d}\right)=\sum_{k=1}^{n}\gcd(n,k)g\left(\gcd(n,k)\right).\label{eq:2.6}
\end{equation}
 Furthermore, we have
\begin{equation}
\prod_{n=1}^{\infty}\left(q^{nz};q^{n}\right)_{\infty}^{h(n)}=\exp\left(-\sum_{m=1}^{\infty}\frac{f(m)q^{mz}}{1-q^{m}}\right)\label{eq:2.7}
\end{equation}
and
\begin{equation}
\prod_{n=1}^{\infty}\left(\frac{\left(q^{n(z+1)};q^{2n}\right)_{\infty}}{\left(q^{nz};q^{2n}\right)_{\infty}}\right)^{h(n)}=\exp\left(\sum_{m=1}^{\infty}\frac{f(m)q^{mz}}{1+q^{m}}\right).\label{eq:2.8}
\end{equation}
 
\end{cor}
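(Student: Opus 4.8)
The plan is to derive Corollary \ref{cor:2.2} directly from Theorem \ref{thm:2.1} by a Möbius-type inversion that absorbs the factors of $1/n$ appearing in the exponents of \eqref{eq:2.2} and \eqref{eq:2.3}. The key observation is that \eqref{eq:2.7} has the same shape as \eqref{eq:2.2}, except the exponent is $h(n)$ rather than $g(n)/n$ and the right-hand side carries $f(m)$ rather than $f(m)/m$. So I would first verify the purely arithmetic identity \eqref{eq:2.6}, and then apply Theorem \ref{thm:2.1} with $g$ replaced by a suitable auxiliary function chosen so that the new ``$f$'' becomes what we want.

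First I would establish \eqref{eq:2.6}. Define $h$ by \eqref{eq:2.5}, namely $h=\left(\tfrac{\mu}{\mathrm{id}}\right)*f$ in the language of Dirichlet convolution (where $\mathrm{id}(d)=d$). Since $f=g*\mathbf{1}$ by the hypothesis \eqref{eq:2.1}, I would compute $n\,h(n)$ by writing $n\,h(n)=\sum_{d\mid n}\tfrac{n}{d}\,\mu(d)\,f(n/d)$ and substituting the convolution for $f$; collecting terms and using the classical identity $\sum_{d\mid m}d\,\mu(m/d)=\varphi(m)$ (equivalently $\mathrm{id}*\mu=\varphi$) should produce $n\,h(n)=\sum_{d\mid n} d\,g(d)\,\varphi(n/d)$, which is the middle expression. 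The final equality in \eqref{eq:2.6} is the standard Cesàro-type rewriting $\sum_{d\mid n} d\,g(d)\,\varphi(n/d)=\sum_{k=1}^{n}\gcd(n,k)\,g(\gcd(n,k))$, obtained by sorting the integers $1\le k\le n$ according to the value $d=\gcd(n,k)$ and noting that exactly $\varphi(n/d)$ of them share a given $\gcd$ equal to $d$.

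Next I would prove the product formulas \eqref{eq:2.7} and \eqref{eq:2.8}. The cleanest route is to apply Theorem \ref{thm:2.1} to the pair $(\tilde g,\tilde f)$ defined by $\tilde g(n)=n\,h(n)$ and $\tilde f(n)=n\,f(n)$, after checking that these satisfy the hypotheses of the theorem. Indeed, \eqref{eq:2.6} says precisely that $\tilde f(n)=\sum_{d\mid n}\tilde g(d)$, so the divisor-sum condition in \eqref{eq:2.1} holds for the tilded pair; and the absolute-convergence condition $\sum_n \tfrac{|\tilde g(n)|}{n}c^n=\sum_n |h(n)|\,c^n<\infty$ for $0<c<1$ follows because $h$ inherits at most the growth of $f$ (its Dirichlet coefficients are controlled by the convergent series for $f$). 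With this substitution, \eqref{eq:2.2} reads $\prod_n (q^{nz};q^n)_\infty^{\tilde g(n)/n}=\exp\!\left(-\sum_m \tfrac{\tilde f(m)}{m}\tfrac{q^{mz}}{1-q^m}\right)$, and since $\tilde g(n)/n=h(n)$ and $\tilde f(m)/m=f(m)$ this is exactly \eqref{eq:2.7}. The same substitution turns \eqref{eq:2.3} into \eqref{eq:2.8}.

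The main obstacle I anticipate is not the algebra but the convergence bookkeeping: one must confirm that the auxiliary function $\tilde g=n\,h(n)$ really meets the hypothesis $\sum_n |h(n)|\,c^n<\infty$ for every $0<c<1$, given only that $\sum_n \tfrac{|g(n)|}{n}c^n<\infty$. Since $h(n)=\sum_{d\mid n}\tfrac{\mu(d)}{d}f(n/d)$ and $f=g*\mathbf 1$, the coefficients $h(n)$ are bounded by a finite convolution of the coefficients of $g$ against $1/d$ and the constant sequence, so the generating series for $h$ converges on the same disk as that for $g$; I would spell this out by a direct majorant estimate rather than invoke the theorem's hypothesis verbatim. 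Once that estimate is in place, both \eqref{eq:2.7} and \eqref{eq:2.8} follow immediately as the specialization $g\mapsto \tilde g$, $f\mapsto\tilde f$ of Theorem \ref{thm:2.1}.
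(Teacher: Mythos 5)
Your overall strategy coincides with the paper's: both derive \eqref{eq:2.7}--\eqref{eq:2.8} by feeding the rescaled pair $\tilde g(n)=n\,h(n)$, $\tilde f(n)=n\,f(n)$ back into Theorem \ref{thm:2.1}, and both obtain the middle equality of \eqref{eq:2.6} from $f=\mathbf{1}*g$ together with $\mathrm{id}*\mu=\varphi$ (the paper phrases this as $\mu^{(1)}*\mathbf{1}=\varphi^{(1)}$ with $\mu^{(1)}(n)=\mu(n)/n$). There are two genuine differences. For the last equality of \eqref{eq:2.6} the paper quotes a Dirichlet-series identity of Gould--Shonhiwa and reads off the coefficient identity $\sum_{d\mid n}F(d)\varphi(n/d)=\sum_{k=1}^{n}F(\gcd(n,k))$, whereas you prove it directly by partitioning $\{1,\dots,n\}$ according to $d=\gcd(n,k)$ and counting $\varphi(n/d)$ elements in each class; your route is more elementary and self-contained. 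You also explicitly raise, and sketch, the verification that the rescaled pair satisfies the hypothesis $\sum_{n}|h(n)|c^{n}<\infty$ of Theorem \ref{thm:2.1}, a point the paper passes over in silence; your majorant estimate does go through (for instance via $c^{dm}\le c^{d+m-1}$ for $d,m\ge 1$, after first noting that $\sum_{n}\frac{|g(n)|}{n}c^{n}<\infty$ for all $0<c<1$ forces $\sum_{n}|g(n)|c^{n}<\infty$ for all $0<c<1$). One slip to correct: the divisor-sum relation needed for the tilded pair is $n\,f(n)=\sum_{d\mid n}d\,h(d)$, and this is \emph{not} what \eqref{eq:2.6} asserts; it follows instead by M\"{o}bius inversion of \eqref{eq:2.5} after multiplying through by $n$, since $n\,h(n)=\sum_{d\mid n}\mu(d)\cdot\frac{n}{d}f\left(\frac{n}{d}\right)$ says $\tilde g=\mu*\tilde f$, whence $\tilde f=\mathbf{1}*\tilde g$. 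With that one-line fix your argument is complete.
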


\begin{proof}
By (\ref{eq:2.5}) we have
\[
nh(n)=\sum_{d\vert n}\mu(d)\frac{n}{d}f\left(\frac{n}{d}\right)=\sum_{d\vert n}df(d)\mu\left(\frac{n}{d}\right),
\]
hence,
\[
nf(n)=\sum_{d\vert n}dh(d).
\]
Then (\ref{eq:2.7}) and (\ref{eq:2.8}) follow from (\ref{eq:2.2})
and (\ref{eq:2.3}) respectively. 

Let 
\[
1(n)=1,\quad\mu^{(1)}(n)=\frac{\mu(n)}{n},\quad\varphi^{(1)}(n)=\frac{\varphi(n)}{n},
\]
where $\varphi(n)$ is the Euler's totient function. We rewrite (\ref{eq:2.1})
and (\ref{eq:2.5}) as a Dirichlet convolution,
\[
f=1*g,\quad h=\mu^{(1)}*f,
\]
then by 
\[
\frac{\varphi(n)}{n}=\sum_{d\vert n}\frac{\mu(d)}{d},
\]
 we get
\[
h=\mu^{(1)}*f=\mu^{(1)}*\left(1*g\right)=\left(\mu^{(1)}*1\right)*g=\varphi^{(1)}*g,
\]
which is 
\[
h(n)=\sum_{d\vert n}\frac{\varphi(d)}{d}g\left(\frac{n}{d}\right).
\]
Hence
\[
nh(n)=\sum_{d\vert n}\varphi(d)\frac{n}{d}g\left(\frac{n}{d}\right)=\sum_{d\vert n}dg(d)\varphi\left(\frac{n}{d}\right).
\]
For any arithmetic function $f(n)$, since \cite{Gould}
\[
\sum_{n=1}^{\infty}\frac{1}{n^{s}}\sum_{k=1}^{n}f\left(\gcd(n,k)\right)=\frac{\zeta(s-1)}{\zeta(s)}\sum_{n=1}^{\infty}\frac{f(n)}{n^{s}},
\]
 then,
\[
\sum_{d\vert n}f(d)\varphi\left(\frac{n}{d}\right)=\sum_{k=1}^{n}f\left(\gcd(n,k)\right).
\]
 Hence,
\[
h(n)=\frac{1}{n}\sum_{k=1}^{n}\gcd(n,k)g\left(\gcd(n,k)\right).
\]
\end{proof}
\begin{rem}
If $\Re(z)>0$ and $0<q<1$, by Corollary \ref{cor:2.2}, then
\begin{equation}
\prod_{n=1}^{\infty}\left(q^{nz};q^{n}\right)_{\infty}^{\sum_{k=1}^{n}\gcd(n,k)g\left(\gcd(n,k)\right)/n}=\exp\left(-\sum_{m=1}^{\infty}\frac{\left(\sum_{d\vert m}g(d)\right)q^{mz}}{1-q^{m}}\right)\label{eq:2.9}
\end{equation}
and
\begin{equation}
\prod_{n=1}^{\infty}\left(\frac{\left(q^{n(z+1)};q^{2n}\right)_{\infty}}{\left(q^{nz};q^{2n}\right)_{\infty}}\right)^{\sum_{k=1}^{n}\gcd(n,k)g\left(\gcd(n,k)\right)/n}=\exp\left(\sum_{m=1}^{\infty}\frac{\left(\sum_{d\vert m}g(d)\right)q^{mz}}{1+q^{m}}\right),\label{eq:2.10}
\end{equation}
where $g(n)$ is any arithmetic function.
\end{rem}

\begin{rem}
The identity
\begin{equation}
\sum_{d\vert n}df(d)\mu\left(\frac{n}{d}\right)=\sum_{d\vert n}dg(d)\varphi\left(\frac{n}{d}\right)=\sum_{k=1}^{n}\gcd(n,k)g\left(\gcd(n,k)\right)\label{eq:2.11}
\end{equation}
 is very curious. Let $f(n)=n^{\alpha},\alpha\in\mathbb{C}$, then
\[
g(n)=n^{\alpha}\prod_{p\vert n}\left(1-\frac{1}{p^{\alpha}}\right)=J_{\alpha}(n),\quad\sum_{d\vert n}df(d)\mu\left(\frac{n}{d}\right)=J_{\alpha+1}(n),
\]
where $J_{\alpha}(n)$ is the Jordan's totient function. Thus,
\begin{equation}
J_{\alpha+1}(n)=\sum_{k=1}^{n}\gcd(n,k)J_{\alpha}\left(\gcd(n,k)\right).\label{eq:2.12}
\end{equation}
\end{rem}

\begin{rem}
Since if $f$ is multiplicative, then
\[
\prod_{p\vert n}\left(1-f(p)\right)=\sum_{d\vert n}\mu(d)f(d).
\]
Then for $0<q<1$, $\Re(z)>0$ and any multiplicative function we
have
\begin{equation}
\prod_{n=1}^{\infty}\left(q^{nz};q^{n}\right)_{\infty}^{\mu(n)f(n)/n}=\exp\left(-\sum_{n=1}^{\infty}\frac{\prod_{p\vert n}\left(1-f(p)\right)}{n}\frac{q^{nz}}{1-q^{n}}\right),\label{eq:2.13}
\end{equation}
 
\begin{equation}
\prod_{n=1}^{\infty}\left(\frac{\left(q^{n(z+1)};q^{2n}\right)_{\infty}}{\left(q^{nz};q^{2n}\right)_{\infty}}\right)^{\mu(n)f(n)/n}=\exp\left(\sum_{n=1}^{\infty}\frac{\prod_{p\vert n}\left(1-f(p)\right)}{n}\frac{q^{nz}}{1+q^{n}}\right).\label{eq:2.14}
\end{equation}
\end{rem}

\begin{cor}
If 
\begin{equation}
\sum_{n=1}^{\infty}\frac{|f(n)|}{n^{2}}<\infty,\quad\sum_{n=1}^{\infty}\frac{f(n)}{n}w^{n}=o\left(\frac{1}{1-w}\right),\ w\to1^{-}.\label{eq:2.15-1}
\end{equation}
Then for $\Re(z)>0$ we have
\begin{equation}
\lim_{q\uparrow1}\prod_{n=1}^{\infty}\left(q^{nz};q^{n}\right)_{\infty}^{(1-q)g(n)/n}=\exp\left(-\sum_{n=1}^{\infty}\frac{f(n)}{n^{2}}\right).\label{eq:2.15}
\end{equation}
\end{cor}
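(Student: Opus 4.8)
The plan is to take logarithms and reduce everything to the limiting behaviour of the Lambert-type sum already evaluated in Theorem~\ref{thm:2.1}. Raising identity (\ref{eq:2.2}) to the power $1-q$ gives at once
\[
\prod_{n=1}^{\infty}\left(q^{nz};q^{n}\right)_{\infty}^{(1-q)g(n)/n}=\exp\left(-(1-q)\sum_{n=1}^{\infty}\frac{f(n)}{n}\frac{q^{nz}}{1-q^{n}}\right),
\]
so, $\exp$ being continuous, it suffices to prove that
\[
S(q):=\sum_{n=1}^{\infty}\frac{f(n)}{n}\frac{q^{nz}}{1+q+\cdots+q^{n-1}}\longrightarrow\sum_{n=1}^{\infty}\frac{f(n)}{n^{2}}\qquad(q\uparrow1),
\]
where I have used $(1-q)/(1-q^{n})=1/(1+q+\cdots+q^{n-1})$. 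For each fixed $n$ the $n$-th summand tends to $f(n)/n^{2}$, since $q^{nz}\to1$ and $1/(1+q+\cdots+q^{n-1})\to1/n$ as $q\uparrow1$; so the whole statement will follow from the dominated convergence theorem for series, once I exhibit a $q$-independent summable majorant.

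Write $s=\Re(z)>0$ and $T_{n}(q)=\frac{f(n)}{n}\frac{(1-q)q^{nz}}{1-q^{n}}$, so that $|T_{n}(q)|=\frac{|f(n)|}{n^{2}}\,\phi_{n}(q)$ with $\phi_{n}(q)=\frac{n(1-q)q^{ns}}{1-q^{n}}$ (using $|q^{nz}|=q^{ns}$). I expect the genuinely technical point, and the main obstacle, to be the uniform estimate $\sup_{n\ge1,\,0<q<1}\phi_{n}(q)=:C_{s}<\infty$. The cleanest route is the substitution $u=q^{n}\in(0,1)$, under which $\phi_{n}(q)=\frac{n(1-u^{1/n})u^{s}}{1-u}$. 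The elementary inequality $e^{x}\ge1+x$ applied to $x=(\log u)/n<0$ gives $n(1-u^{1/n})=n(1-e^{x})\le-nx=-\log u$, whence
\[
\phi_{n}(q)\le\frac{-u^{s}\log u}{1-u}=:\psi(u),\qquad0<u<1.
\]
The function $\psi$ is continuous on $(0,1)$, with $\psi(u)\to1$ as $u\uparrow1$ (from $-\log u\sim1-u$) and $\psi(u)\to0$ as $u\downarrow0$ (as $s>0$), hence bounded; this yields $C_{s}<\infty$ independently of $n$ and $q$.

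Consequently $|T_{n}(q)|\le C_{s}\,|f(n)|/n^{2}$ for every $q\in(0,1)$, and this majorant is summable by the first hypothesis in (\ref{eq:2.15-1}), which simultaneously guarantees that the target series converges absolutely. The dominated convergence theorem for series then permits passing the limit inside the sum, giving $S(q)\to\sum_{n}f(n)/n^{2}$ and hence (\ref{eq:2.15}). A final remark on the hypotheses: this route uses only the absolute-convergence condition $\sum|f(n)|/n^{2}<\infty$. The second condition $\sum\frac{f(n)}{n}w^{n}=o\bigl(1/(1-w)\bigr)$ is instead the natural Tauberian hypothesis for an alternative argument in which one writes $\frac{q^{nz}}{1-q^{n}}=\sum_{j\ge0}q^{n(z+j)}$, recasts $(1-q)S(q)=(1-q)\sum_{j\ge0}F(q^{z+j})$ with $F(w)=\sum_{n}\frac{f(n)}{n}w^{n}$ as a Riemann sum for $\int_{0}^{\infty}F(e^{-t})\,dt=\sum_{n}f(n)/n^{2}$, and uses the $o\bigl(1/(1-w)\bigr)$ estimate to tame the single near-boundary term $j=0$; I would keep the dominated-convergence version as the primary proof, since it is shorter and self-contained.
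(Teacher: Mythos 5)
Your proof is correct, and it takes a genuinely different route from the paper's. The paper splits $\tfrac{q^{nz}}{1-q^{n}}=q^{nz}+\tfrac{q^{n(z+1)}}{1-q^{n}}$, bounds the shifted piece using the AM--GM inequality $\tfrac{1-q^{n}}{1-q}>nq^{(n-1)/2}$ (which yields a summable majorant only for $\Re(z)\ge\tfrac12$, hence is applied at $z+1$), and then invokes the second hypothesis of (\ref{eq:2.15-1}) to show the leftover term $(1-q)\sum_{n}\tfrac{f(n)}{n}q^{nz}$ is $o(1)$. You instead establish the single uniform estimate $\sup_{n,q}\tfrac{n(1-q)q^{n\Re(z)}}{1-q^{n}}\le\sup_{0<u<1}\tfrac{-u^{\Re(z)}\log u}{1-u}<\infty$ via $u=q^{n}$ and $e^{x}\ge1+x$, valid for every $\Re(z)>0$, and apply dominated convergence once. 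Your estimate is sharper than the paper's and your argument is shorter; it also demonstrates that the Tauberian hypothesis $\sum_{n}\tfrac{f(n)}{n}w^{n}=o\bigl(1/(1-w)\bigr)$ is not needed for the stated conclusion, confirming the paper's own remark that (\ref{eq:2.15}) can hold without (\ref{eq:2.15-1}). A further incidental advantage: the paper's use of that hypothesis at $w=q^{z}$ is delicate for non-real $z$ (the hypothesis is stated for real $w\to1^{-}$), a point your route avoids entirely. The only thing the paper's decomposition buys is the explicit Abelian flavour of isolating the boundary term, which you correctly identify as an alternative rather than a necessity.
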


\begin{proof}
Since for $n\in\mathbb{N},\ 0<q<1$,
\[
\frac{1-q^{n}}{n(1-q)}=\frac{1+q+\dots+q^{n-1}}{n}
\]
and
\[
q^{(n-1)/2}=\sqrt[n]{q^{n(n-1)/2}}<\frac{1+q+\dots+q^{n-1}}{n}<1,
\]
 then
\[
nq^{(n-1)/2}<\frac{1-q^{n}}{(1-q)}<n,\quad0<q<1,\ n\in\mathbb{N}.
\]
Then for $\Re(z)\ge\frac{1}{2}$ we have
\[
\sum_{n=1}^{\infty}\left|\frac{f(n)}{n}\frac{(1-q)q^{nz}}{1-q^{n}}\right|\le\sqrt{q}\sum_{n=1}^{\infty}\frac{|f(n)|q^{\left(\Re(z)-1/2\right)n}}{n^{2}}<\sum_{n=1}^{\infty}\frac{|f(n)|}{n^{2}}.
\]
Thus for $\Re(z)>0$ we have
\[
\lim_{q\to1^{-}}\sum_{n=1}^{\infty}\frac{f(n)}{n}\frac{(1-q)q^{n(z+1)}}{1-q^{n}}=\sum_{n=1}^{\infty}\frac{f(n)}{n^{2}}
\]
 uniformly in $z$.

By
\begin{align*}
 & \sum_{n=1}^{\infty}\frac{f(n)}{n}\frac{(1-q)q^{nz}}{1-q^{n}}=(1-q)\sum_{n=1}^{\infty}\frac{f(n)}{n}q^{nz}+\sum_{n=1}^{\infty}\frac{f(n)}{n}\frac{(1-q)q^{n(z+1)}}{1-q^{n}}\\
 & =o\left(\frac{1-q}{1-q^{z}}\right)+\sum_{n=1}^{\infty}\frac{f(n)}{n}\frac{(1-q)q^{n(z+1)}}{1-q^{n}}=o(1)+\sum_{n=1}^{\infty}\frac{f(n)}{n}\frac{(1-q)q^{n(z+1)}}{1-q^{n}}
\end{align*}
we have
\[
\lim_{q\to1^{-}}\sum_{n=1}^{\infty}\frac{f(n)}{n}\frac{(1-q)q^{nz}}{1-q^{n}}=\sum_{n=1}^{\infty}\frac{f(n)}{n^{2}}
\]

\[
\prod_{n=1}^{\infty}\left(q^{nz};q^{n}\right)_{\infty}^{g(n)/n}=\exp\left(-\sum_{n=1}^{\infty}\frac{f(n)}{n}\frac{q^{nz}}{1-q^{n}}\right)
\]
\end{proof}
Clearly, (\ref{eq:2.15-1}) is satisfied if 
\begin{equation}
f(n)=\mathcal{O}\left(\frac{n}{\log^{2}(n+1)}\right),\quad n\to\infty.
\end{equation}
Of course, if $f(n)\ge0$, then (\ref{eq:2.15}) is always true regardless
whether (\ref{eq:2.15-1}) is true or not. 

\section{Examples}

For the brevity we only present selected examples for Theorem \ref{thm:2.1}.
They are related to some well-known arithmetic functions studied in
\cite{Apostol,Hardy,McCarthy,Sandor1,Sandor2,Sivaramakrishnan}, and
in all the examples we assume that $\Re(z)>0$ and $0<q<1$ unless
otherwise stated. 

\subsubsection{The M\"{o}bius function $\mu(n)$.}

The M\"{o}bius function $\mu(n)$ satisfies
\[
\sum_{d\mid n}\mu(d)=\begin{cases}
1 & \text{ if }n=1\\
0 & \text{ if }n\neq1
\end{cases},
\]
to get
\begin{equation}
\prod_{n=1}^{\infty}\left(q^{nz};q^{n}\right)_{\infty}^{\mu(n)/n}=\exp\left(-\frac{q^{z}}{1-q}\right).\label{eq:3.1}
\end{equation}
\begin{equation}
\prod_{n=1}^{\infty}\left(\frac{\left(q^{n(z+1)};q^{2n}\right)_{\infty}}{\left(q^{nz};q^{2n}\right)_{\infty}}\right)^{\mu(n)/n}=\exp\left(\frac{q^{z}}{1+q}\right).\label{eq:3.2}
\end{equation}
Then,
\begin{equation}
\lim_{q\uparrow1}\prod_{n=1}^{\infty}\left(q^{nz};q^{n}\right)_{\infty}^{(1-q)\mu(n)/n}=e^{-1}\label{eq:3.1a}
\end{equation}
and
\begin{equation}
\lim_{q\uparrow1}\prod_{n=1}^{\infty}\left(\frac{\left(q^{n(z+1)};q^{2n}\right)_{\infty}}{\left(q^{nz};q^{2n}\right)_{\infty}}\right)^{\mu(n)/n}=e^{1/2}.\label{eq:3.2a}
\end{equation}
Let $\omega(n)$ equal the number of distinct prime factors of $n$,
then, 

\[
\left(-1\right)^{\omega(n)}=\sum_{d\vert n}2^{\omega(d)}\mu(d).
\]
Thus,
\begin{equation}
\prod_{n=1}^{\infty}\left(q^{nz};q^{n}\right)_{\infty}^{2^{\omega(n)}\mu(n)/n}=\exp\left(-\sum_{n=1}^{\infty}\frac{\left(-1\right)^{\omega(n)}}{n}\frac{q^{nz}}{1-q^{n}}\right)\label{eq:3.3}
\end{equation}
and
\begin{equation}
\prod_{n=1}^{\infty}\left(\frac{\left(q^{n(z+1)};q^{2n}\right)_{\infty}}{\left(q^{nz};q^{2n}\right)_{\infty}}\right)^{2^{\omega(n)}\mu(n)/n}=\exp\left(\sum_{n=1}^{\infty}\frac{\left(-1\right)^{\omega(n)}}{n}\frac{q^{nz}}{1+q^{n}}\right).\label{eq:3.4}
\end{equation}
Since $\sum_{n=1}^{\infty}\frac{\left(-1\right)^{\omega(n)}}{n^{2}}$
converges absolutely, then
\begin{equation}
\lim_{q\uparrow1}\prod_{n=1}^{\infty}\left(q^{nz};q^{n}\right)_{\infty}^{2^{\omega(n)}\mu(n)(1-q)/n}=\exp\left(-\sum_{n=1}^{\infty}\frac{\left(-1\right)^{\omega(n)}}{n^{2}}\right).\label{eq:3.3a}
\end{equation}
Let $\theta(n)$ be the number of ordered pairs $\left(a,b\right)$
of positive integers such that 
\[
\gcd(a,b)=1,\quad n=ab.
\]
 then
\[
\theta(n)=2^{\omega(n)}=\sum_{d\vert n}\left|\mu(n)\right|.
\]
 Thus,
\begin{equation}
\prod_{n=1}^{\infty}\left(q^{nz};q^{n}\right)_{\infty}^{\left|\mu(n)\right|/n}=\exp\left(-\sum_{n=1}^{\infty}\frac{2^{\omega(n)}}{n}\frac{q^{nz}}{1-q^{n}}\right)\label{eq:3.5}
\end{equation}
 and
\begin{equation}
\prod_{n=1}^{\infty}\left(\frac{\left(q^{n(z+1)};q^{2n}\right)_{\infty}}{\left(q^{nz};q^{2n}\right)_{\infty}}\right)^{\left|\mu(n)\right|/n}=\exp\left(\sum_{n=1}^{\infty}\frac{2^{\omega(n)}}{n}\frac{q^{nz}}{1+q^{n}}\right).\label{eq:3.6}
\end{equation}
From 
\[
\sum_{n=1}^{\infty}\frac{2^{\omega(n)}}{n^{s}}=\frac{\left(\zeta(s)\right)^{2}}{\zeta(2s)},\quad s>1
\]
we have
\[
\sum_{n=1}^{\infty}\frac{2^{\omega(n)}}{n^{2}}=\frac{5}{2},\quad\sum_{n=1}^{\infty}\frac{2^{\omega(n)}}{n}=\infty.
\]
Then
\begin{equation}
\lim_{q\uparrow1}\prod_{n=1}^{\infty}\left(q^{nz};q^{n}\right)_{\infty}^{\left|\mu(n)\right|(1-q)/n}=e^{-5/2}\label{eq:3.5a}
\end{equation}
and
\begin{equation}
\lim_{q\uparrow1}\prod_{n=1}^{\infty}\left(\frac{\left(q^{n(z+1)};q^{2n}\right)_{\infty}}{\left(q^{nz};q^{2n}\right)_{\infty}}\right)^{\left|\mu(n)\right|/n}=\infty.
\end{equation}

\subsubsection{von Mangoldt function $\Lambda(n)$}

From 
\[
\log n=\sum_{d\vert n}\Lambda(d)
\]
 to get
\begin{equation}
\prod_{n=1}^{\infty}\left(q^{nz};q^{n}\right)_{\infty}^{\Lambda(n)/n}=\exp\left(-\sum_{n=1}^{\infty}\frac{\log(n)}{n}\frac{q^{nz}}{1-q^{n}}\right)\label{eq:3.7}
\end{equation}
and
\begin{equation}
\prod_{n=1}^{\infty}\left(\frac{\left(q^{n(z+1)};q^{2n}\right)_{\infty}}{\left(q^{nz};q^{2n}\right)_{\infty}}\right)^{\Lambda(n)/n}=\exp\left(\sum_{n=1}^{\infty}\frac{\log(n)}{n}\frac{q^{nz}}{1+q^{n}}\right).\label{eq:3.8}
\end{equation}
It is known that
\[
\sum_{n=1}^{\infty}\frac{\log n}{n^{2}}=-\frac{\pi^{2}}{6}\left(\log\left(\frac{2\pi}{A^{12}}\right)+\gamma\right),
\]
where $\gamma=0.577\dots$ is the Euler constant, and $A=1.282\dots$
the Glaisher constant. Then 
\begin{equation}
\lim_{q\uparrow1}\prod_{n=1}^{\infty}\left(q^{nz};q^{n}\right)_{\infty}^{\Lambda(n)(1-q)/n}=\left(\frac{2\pi e^{\gamma}}{A^{12}}\right)^{\pi^{2}/6}\label{eq:3.7a}
\end{equation}
and
\begin{equation}
\lim_{q\uparrow1}\prod_{n=1}^{\infty}\left(\frac{\left(q^{n(z+1)};q^{2n}\right)_{\infty}}{\left(q^{nz};q^{2n}\right)_{\infty}}\right)^{\Lambda(n)/n}=\infty.
\end{equation}
From 
\[
\Lambda(n)=-\sum_{d\vert n}\mu(d)\log d
\]
 to get
\begin{equation}
\prod_{n=1}^{\infty}\left(q^{nz};q^{n}\right)_{\infty}^{\mu(n)\log n/n}=\exp\left(\sum_{n=1}^{\infty}\frac{\Lambda(n)}{n}\frac{q^{nz}}{1-q^{n}}\right)\label{eq:3.9}
\end{equation}
and
\begin{equation}
\prod_{n=1}^{\infty}\left(\frac{\left(q^{n(z+1)};q^{2n}\right)_{\infty}}{\left(q^{nz};q^{2n}\right)_{\infty}}\right)^{\mu(n)\log n/n}=\exp\left(-\sum_{n=1}^{\infty}\frac{\Lambda(n)}{n}\frac{q^{nz}}{1+q^{n}}\right).\label{eq:3.10}
\end{equation}
 By
\[
\sum_{n=1}^{\infty}\frac{\Lambda(n)}{n^{2}}=\log\left(\frac{A^{12}}{2\pi}\right)-\gamma,\quad\sum_{n=1}^{\infty}\frac{\Lambda(n)}{n}=\infty,
\]
then 
\begin{equation}
\lim_{q\uparrow1}\prod_{n=1}^{\infty}\left(q^{nz};q^{n}\right)_{\infty}^{(1-q)\mu(n)\log n/n}=\frac{A^{12}}{2\pi e^{\gamma}}\label{eq:3.9a}
\end{equation}
and
\begin{equation}
\lim_{q\uparrow1}\prod_{n=1}^{\infty}\left(\frac{\left(q^{n(z+1)};q^{2n}\right)_{\infty}}{\left(q^{nz};q^{2n}\right)_{\infty}}\right)^{\mu(n)\log n/n}=0.
\end{equation}

\subsubsection{The Euler's totient function $\varphi(n)$ }

From
\[
\sum_{d\vert n}\varphi(d)=n
\]
 to get
\begin{equation}
\prod_{n=1}^{\infty}\left(q^{nz};q^{n}\right)_{\infty}^{\varphi(n)/n}=\exp\left(-\sum_{n=1}^{\infty}\frac{q^{nz}}{1-q^{n}}\right)\label{eq:3.11}
\end{equation}
 and
\begin{equation}
\prod_{n=1}^{\infty}\left(\frac{\left(q^{n(z+1)};q^{2n}\right)_{\infty}}{\left(q^{nz};q^{2n}\right)_{\infty}}\right)^{\varphi(n)/n}=\exp\left(\sum_{n=1}^{\infty}\frac{q^{nz}}{1+q^{n}}\right).\label{eq:3.12}
\end{equation}
Then,
\[
\lim_{q\uparrow1}\prod_{n=1}^{\infty}\left(q^{nz};q^{n}\right)_{\infty}^{(1-q)\varphi(n)/n}=0
\]
 and
\[
\lim_{q\uparrow1}\prod_{n=1}^{\infty}\left(\frac{\left(q^{n(z+1)};q^{2n}\right)_{\infty}}{\left(q^{nz};q^{2n}\right)_{\infty}}\right)^{\varphi(n)/n}=\infty.
\]
From
\[
\frac{\varphi(n)}{n}=\sum_{d\vert n}\frac{\mu(d)}{d}
\]
 to get
\begin{equation}
\prod_{n=1}^{\infty}\left(q^{nz};q^{n}\right)_{\infty}^{\mu(n)/n^{2}}=\exp\left(-\sum_{n=1}^{\infty}\frac{\varphi(n)}{n^{2}}\frac{q^{nz}}{1-q^{n}}\right)\label{eq:3.13}
\end{equation}
 and
\begin{equation}
\prod_{n=1}^{\infty}\left(\frac{\left(q^{n(z+1)};q^{2n}\right)_{\infty}}{\left(q^{nz};q^{2n}\right)_{\infty}}\right)^{\mu(n)/n^{2}}=\exp\left(\sum_{n=1}^{\infty}\frac{\varphi(n)}{n^{2}}\frac{q^{nz}}{1+q^{n}}\right).\label{eq:3.14}
\end{equation}
 By
\[
\sum_{n=1}^{\infty}\frac{\varphi(n)}{n^{s}}=\frac{\zeta(s-1)}{\zeta(s)},\quad s>2
\]
we get 
\begin{equation}
\lim_{q\uparrow1}\prod_{n=1}^{\infty}\left(q^{nz};q^{n}\right)_{\infty}^{(1-q)\mu(n)/n^{2}}=\exp\left(-\frac{\zeta(2)}{\zeta(3)}\right)\label{eq:3.13-1}
\end{equation}
and
\begin{equation}
\lim_{q\uparrow1}\prod_{n=1}^{\infty}\left(\frac{\left(q^{n(z+1)};q^{2n}\right)_{\infty}}{\left(q^{nz};q^{2n}\right)_{\infty}}\right)^{\mu(n)/n^{2}}=\infty.
\end{equation}
From
\[
\frac{n}{\varphi(n)}=\sum_{d\vert n}\frac{\mu^{2}(d)}{\varphi(d)}
\]
to get
\begin{equation}
\prod_{n=1}^{\infty}\left(q^{nz};q^{n}\right)_{\infty}^{\mu^{2}(n)/(n\varphi(n))}=\exp\left(-\sum_{n=1}^{\infty}\frac{1}{\varphi(n)}\frac{q^{nz}}{1-q^{n}}\right)\label{eq:3.15}
\end{equation}
and
\begin{equation}
\prod_{n=1}^{\infty}\left(\frac{\left(q^{n(z+1)};q^{2n}\right)_{\infty}}{\left(q^{nz};q^{2n}\right)_{\infty}}\right)^{\mu^{2}(n)/(n\varphi(n))}=\exp\left(\sum_{n=1}^{\infty}\frac{1}{\varphi(n)}\frac{q^{nz}}{1+q^{n}}\right).\label{eq:3.16}
\end{equation}
 Since \cite{Rosser}
\[
{\displaystyle \varphi(n)>\frac{n}{e^{\gamma}\;\log\log n+\frac{3}{\log\log n}}\quad n>2,}
\]
 then 
\[
\sum_{n=1}^{\infty}\frac{1}{n\varphi(n)}
\]
 converges. On the other hand, since $\varphi(n)\le n$, then,
\[
\sum_{n=1}^{\infty}\frac{1}{\varphi(n)}=\infty.
\]
Thus,
\begin{equation}
\lim_{q\uparrow1}\prod_{n=1}^{\infty}\left(q^{nz};q^{n}\right)_{\infty}^{(1-q)\mu^{2}(n)/(n\varphi(n))}=\exp\left(-\sum_{n=1}^{\infty}\frac{1}{n\varphi(n)}\right)\label{eq:3.15a}
\end{equation}
 and for $z>0$ we have
\begin{equation}
\lim_{q\uparrow1}\prod_{n=1}^{\infty}\left(\frac{\left(q^{n(z+1)};q^{2n}\right)_{\infty}}{\left(q^{nz};q^{2n}\right)_{\infty}}\right)^{\mu^{2}(n)/(n\varphi(n))}=\infty.
\end{equation}

\subsubsection{Jordan's totient function $J_{k}(n)$}

For each $k\in\mathbb{N}$, from 
\[
n^{k}=\sum_{d\vert n}J_{k}(d)
\]
 to get 
\begin{equation}
\prod_{n=1}^{\infty}\left(q^{nz};q^{n}\right)_{\infty}^{J_{k}(n)/n}=\exp\left(-\sum_{n=1}^{\infty}\frac{n^{k-1}q^{nz}}{1-q^{n}}\right)\label{eq:3.17}
\end{equation}
 and
\begin{equation}
\prod_{n=1}^{\infty}\left(\frac{\left(q^{n(z+1)};q^{2n}\right)_{\infty}}{\left(q^{nz};q^{2n}\right)_{\infty}}\right)^{J_{k}(n)/n}=\exp\left(\sum_{n=1}^{\infty}\frac{n^{k-1}q^{nz}}{1+q^{n}}\right).\label{eq:3.18}
\end{equation}
By (\ref{eq:3.17}) and (\ref{eq:3.18}) we get
\begin{equation}
\lim_{q\uparrow1}\prod_{n=1}^{\infty}\left(q^{nz};q^{n}\right)_{\infty}^{(1-q)J_{k}(n)/n}=\exp\left(-\zeta(2-k)\right),\quad\Re(k)<1\label{eq:3.17a}
\end{equation}
 and
\begin{equation}
\lim_{q\uparrow1}\prod_{n=1}^{\infty}\left(\frac{\left(q^{n(z+1)};q^{2n}\right)_{\infty}}{\left(q^{nz};q^{2n}\right)_{\infty}}\right)^{J_{k}(n)/n}=e^{\zeta(1-k)/2},\quad\Re(k)<0.\label{eq:3.18a}
\end{equation}
From 
\[
\frac{J_{k}(n)}{n^{k}}=\sum_{d\vert n}\frac{\mu(d)}{d^{k}}
\]
 to get
\begin{equation}
\prod_{n=1}^{\infty}\left(q^{nz};q^{n}\right)_{\infty}^{\mu(n)/n^{k+1}}=\exp\left(-\sum_{n=1}^{\infty}\frac{J_{k}(n)}{n^{k+1}}\frac{q^{nz}}{1-q^{n}}\right)\label{eq:3.19}
\end{equation}
 and
\begin{equation}
\prod_{n=1}^{\infty}\left(\frac{\left(q^{n(z+1)};q^{2n}\right)_{\infty}}{\left(q^{nz};q^{2n}\right)_{\infty}}\right)^{\mu(n)/n^{k+1}}=\exp\left(\sum_{n=1}^{\infty}\frac{J_{k}(n)}{n^{k+1}}\frac{q^{nz}}{1+q^{n}}\right).\label{eq:3.20}
\end{equation}
From
\[
\sum_{n=1}^{\infty}\frac{J_{k}(n)}{n^{s}}=\frac{\zeta(s-k)}{\zeta(s)},\quad\Re(k)<0,\ \Re(s)>1,
\]
 we get
\[
\sum_{n=1}^{\infty}\frac{J_{k}(n)}{n^{k+2}}=\frac{\zeta(2)}{\zeta(k+2)},\quad\Re(k)>-1
\]
converges absolutely. Then for $\Re(k)>-1$,
\begin{equation}
\lim_{q\uparrow1}\prod_{n=1}^{\infty}\left(q^{nz};q^{n}\right)_{\infty}^{(1-q)\mu(n)/n^{k+1}}=\exp\left(-\zeta(2)/\zeta(k+2)\right)\label{eq:3.19a}
\end{equation}
and $k>1,\ z>0$,
\[
\lim_{q\uparrow1}\prod_{n=1}^{\infty}\left(\frac{\left(q^{n(z+1)};q^{2n}\right)_{\infty}}{\left(q^{nz};q^{2n}\right)_{\infty}}\right)^{\mu(n)/n^{k+1}}=\infty.
\]
For $k\in\mathbb{N},$from \cite{McCarthy}
\[
\frac{J_{2k}(n)}{J_{k}(n)}=n^{k}\sum_{d\vert n}\frac{\left|\mu(d)\right|}{d^{k}}
\]
to get
\begin{equation}
\prod_{n=1}^{\infty}\left(q^{nz};q^{n}\right)_{\infty}^{|\mu(d)|/n^{k+1}}=\exp\left(-\sum_{n=1}^{\infty}\frac{J_{2k}(n)}{n^{k+1}J_{k}(n)}\frac{q^{nz}}{1-q^{n}}\right)\label{eq:3.21}
\end{equation}
 and
\begin{equation}
\prod_{n=1}^{\infty}\left(\frac{\left(q^{n(z+1)};q^{2n}\right)_{\infty}}{\left(q^{nz};q^{2n}\right)_{\infty}}\right)^{|\mu(d)|/n^{k+1}}=\exp\left(\sum_{n=1}^{\infty}\frac{J_{2k}(n)}{n^{k+1}J_{k}(n)}\frac{q^{nz}}{1+q^{n}}\right).\label{eq:3.22}
\end{equation}
Since
\[
\sum_{n=1}^{\infty}\frac{J_{2k}(n)}{n^{s}J_{k}(n)}=\frac{\zeta(s)\zeta(s-k)}{\zeta(2s)},\quad\Re(s-k)>1,\ \Re(s)>1,
\]
then
\[
\sum_{n=1}^{\infty}\frac{J_{2k}(n)}{n^{k+2}J_{k}(n)}=\frac{\zeta(2)\zeta(k+2)}{\zeta(2k+4)},\quad\Re(k)>-1
\]
converges absolutely, then for $\Re(k)>-1$ we have
\begin{equation}
\lim_{q\uparrow1}\prod_{n=1}^{\infty}\left(q^{nz};q^{n}\right)_{\infty}^{(1-q)|\mu(d)|/n^{k+1}}=\exp\left(-\frac{\zeta(2)\zeta(k+2)}{\zeta(2k+4)}\right)\label{eq:3.21a}
\end{equation}
and for $k>1,z>0$ we have
\[
\lim_{q\uparrow1}\prod_{n=1}^{\infty}\left(\frac{\left(q^{n(z+1)};q^{2n}\right)_{\infty}}{\left(q^{nz};q^{2n}\right)_{\infty}}\right)^{|\mu(d)|/n^{k+1}}=\infty.
\]

\subsubsection{The divisor functions }

From 
\[
d(n^{2})=\sum_{d\vert n}2^{\omega(d)}
\]
 to get
\begin{equation}
\prod_{n=1}^{\infty}\left(q^{nz};q^{n}\right)_{\infty}^{2^{\omega(n)}/n}=\exp\left(-\sum_{n=1}^{\infty}\frac{d(n^{2})}{n}\frac{q^{nz}}{1-q^{n}}\right)\label{eq:3.23}
\end{equation}
 and
\begin{equation}
\prod_{n=1}^{\infty}\left(\frac{\left(q^{n(z+1)};q^{2n}\right)_{\infty}}{\left(q^{nz};q^{2n}\right)_{\infty}}\right)^{2^{\omega(n)}/n}=\exp\left(\sum_{n=1}^{\infty}\frac{d(n^{2})}{n}\frac{q^{nz}}{1+q^{n}}\right).\label{eq:3.24}
\end{equation}
By
\[
\sum_{n=1}^{\infty}\frac{d(n^{2})}{n^{s}}=\frac{\zeta^{3}(s)}{\zeta(2s)},\quad\Re(s)>1
\]
 to get
\[
\sum_{n=1}^{\infty}\frac{d(n^{2})}{n^{2}}=\frac{5\pi^{2}}{12},\quad\sum_{n=1}^{\infty}\frac{d(n^{2})}{n}=\infty,
\]
then
\begin{equation}
\lim_{q\uparrow1}\prod_{n=1}^{\infty}\left(q^{nz};q^{n}\right)_{\infty}^{(1-q)2^{\omega(n)}/n}=\exp\left(-\frac{5\pi^{2}}{12}\right)\label{eq:3.23a}
\end{equation}
and for $z>0$ we have
\begin{equation}
\lim_{q\uparrow1}\prod_{n=1}^{\infty}\left(\frac{\left(q^{n(z+1)};q^{2n}\right)_{\infty}}{\left(q^{nz};q^{2n}\right)_{\infty}}\right)^{2^{\omega(n)}/n}=\infty.
\end{equation}
From 
\[
\sum_{\delta\vert n}d(\delta^{2})=d^{2}(n)
\]
to get
\begin{equation}
\prod_{n=1}^{\infty}\left(q^{nz};q^{n}\right)_{\infty}^{d(n^{2})/n}=\exp\left(-\sum_{n=1}^{\infty}\frac{d^{2}(n)}{n}\frac{q^{nz}}{1-q^{n}}\right)\label{eq:3.25}
\end{equation}
 and
\begin{equation}
\prod_{n=1}^{\infty}\left(\frac{\left(q^{n(z+1)};q^{2n}\right)_{\infty}}{\left(q^{nz};q^{2n}\right)_{\infty}}\right)^{d(n^{2})/n}=\exp\left(\sum_{n=1}^{\infty}\frac{d^{2}(n)}{n}\frac{q^{nz}}{1+q^{n}}\right).\label{eq:3.26}
\end{equation}
By
\[
\sum_{n=1}^{\infty}\frac{d^{2}(n)}{n^{s}}=\frac{\zeta^{4}(s)}{\zeta(2s)},\quad\Re(s)>1
\]
 to get
\[
\sum_{n=1}^{\infty}\frac{d^{2}(n)}{n^{2}}=\frac{5\pi^{4}}{72},\quad\sum_{n=1}^{\infty}\frac{d^{2}(n)}{n}=\infty.
\]
Then,
\begin{equation}
\lim_{q\uparrow1}\prod_{n=1}^{\infty}\left(q^{nz};q^{n}\right)_{\infty}^{(1-q)d(n^{2})/n}=\exp\left(-\frac{5\pi^{4}}{72}\right)\label{eq:3.25a}
\end{equation}
and
\begin{equation}
\lim_{q\uparrow1}\prod_{n=1}^{\infty}\left(\frac{\left(q^{n(z+1)};q^{2n}\right)_{\infty}}{\left(q^{nz};q^{2n}\right)_{\infty}}\right)^{d(n^{2})/n}=\infty.
\end{equation}
From 
\[
{\displaystyle \sigma_{s}(n)=\sum_{d\mid n}d^{s},\quad s\in\mathbb{C}}
\]
to get
\begin{equation}
\prod_{n=1}^{\infty}\left(q^{nz};q^{n}\right)_{\infty}^{n^{s-1}}=\exp\left(-\sum_{n=1}^{\infty}\frac{\sigma_{s}(n)}{n}\frac{q^{nz}}{1-q^{n}}\right)\label{eq:3.27}
\end{equation}
 and
\begin{equation}
\prod_{n=1}^{\infty}\left(\frac{\left(q^{n(z+1)};q^{2n}\right)_{\infty}}{\left(q^{nz};q^{2n}\right)_{\infty}}\right)^{n^{s-1}}=\exp\left(\sum_{n=1}^{\infty}\frac{\sigma_{s}(n)}{n}\frac{q^{nz}}{1+q^{n}}\right).\label{eq:3.28}
\end{equation}
By
\[
\sum_{n=1}^{\infty}\frac{\sigma_{a}(n)}{n^{s}}=\zeta(s)\zeta(s-a),\quad\Re(s)>1,\ \Re(s-a)>1
\]
we have
\[
\sum_{n=1}^{\infty}\frac{\sigma_{s}(n)}{n^{2}}=\zeta(2)\zeta(2-s),\quad\Re(s)<1.
\]
Then,
\begin{equation}
\lim_{q\uparrow1}\prod_{n=1}^{\infty}\left(q^{nz};q^{n}\right)_{\infty}^{(1-q)n^{s-1}}=\exp\left(-\zeta(2)\zeta(2-s)\right),\quad\Re(s)<1\label{eq:3.27-1}
\end{equation}
 and for $s<0,\ z>0$ we have
\begin{equation}
\lim_{q\uparrow1}\prod_{n=1}^{\infty}\left(\frac{\left(q^{n(z+1)};q^{2n}\right)_{\infty}}{\left(q^{nz};q^{2n}\right)_{\infty}}\right)^{n^{s-1}}=\infty.
\end{equation}

\subsubsection{Liouville function }

From 
\[
\sum_{d\mid n}\lambda(d)=\begin{cases}
1 & \text{ if \ensuremath{n} is a square }\\
0 & \text{ if \ensuremath{n} is not a square }
\end{cases}
\]
 to get
\begin{equation}
\prod_{n=1}^{\infty}\left(q^{nz};q^{n}\right)_{\infty}^{\lambda(n)/n}=\exp\left(-\sum_{n=1}^{\infty}\frac{q^{n^{2}z}}{n^{2}\left(1-q^{n^{2}}\right)}\right)\label{eq:3.29}
\end{equation}
 and
\begin{equation}
\prod_{n=1}^{\infty}\left(\frac{\left(q^{n(z+1)};q^{2n}\right)_{\infty}}{\left(q^{nz};q^{2n}\right)_{\infty}}\right)^{\lambda(n)/n}=\exp\left(\sum_{n=1}^{\infty}\frac{q^{n^{2}z}}{n^{2}\left(1+q^{n^{2}}\right)}\right).\label{eq:3.30}
\end{equation}
 Then,
\begin{equation}
\lim_{q\uparrow1}\prod_{n=1}^{\infty}\left(q^{nz};q^{n}\right)_{\infty}^{(1-q)\lambda(n)/n}=\exp\left(\frac{\pi^{4}}{90}\right)\label{eq:3.29-1}
\end{equation}
and
\begin{equation}
\lim_{q\uparrow1}\prod_{n=1}^{\infty}\left(\frac{\left(q^{n(z+1)};q^{2n}\right)_{\infty}}{\left(q^{nz};q^{2n}\right)_{\infty}}\right)^{\lambda(n)/n}=\exp\left(\frac{\pi^{2}}{12}\right).\label{eq:3.30-1}
\end{equation}

\subsubsection{Ramanujan's sum }

For each $n,q\in\mathbb{N}$, the Ramanujan's sum $c_{q}(n)$ defined
by
\[
c_{q}(n)=\sum_{{a=1\atop \gcd(a,q)=1}}^{q}e^{2\pi ian/q}.
\]
It satisfies 
\[
\sum_{d\mid u}c_{d}(v)=\begin{cases}
0 & \;\mbox{ if }u\nmid v\\
u & \;\mbox{ if }u\mid v
\end{cases},\quad u,v\in\mathbb{N}.
\]
Then for $v\in\mathbb{N}$ we have
\begin{equation}
\prod_{n=1}^{\infty}\left(q^{nz};q^{n}\right)_{\infty}^{c_{n}(v)/n}=\exp\left(-\sum_{n\vert v}\frac{q^{nz}}{1-q^{n}}\right)\label{eq:3.31}
\end{equation}
 and
\begin{equation}
\prod_{n=1}^{\infty}\left(\frac{\left(q^{n(z+1)};q^{2n}\right)_{\infty}}{\left(q^{nz};q^{2n}\right)_{\infty}}\right)^{c_{n}(v)/n}=\exp\left(\sum_{n\vert v}\frac{q^{nz}}{1+q^{n}}\right).\label{eq:3.32}
\end{equation}
 Thus,
\begin{equation}
\lim_{q\uparrow1}\prod_{n=1}^{\infty}\left(q^{nz};q^{n}\right)_{\infty}^{(1-q)c_{n}(v)/n}=\exp\left(-\sigma_{-1}(v)\right)\label{eq:3.31-1}
\end{equation}
 and
\begin{equation}
\lim_{q\uparrow1}\prod_{n=1}^{\infty}\left(\frac{\left(q^{n(z+1)};q^{2n}\right)_{\infty}}{\left(q^{nz};q^{2n}\right)_{\infty}}\right)^{c_{n}(v)/n}=\exp\left(\frac{d(v)}{2}\right).\label{eq:3.32-1}
\end{equation}

\subsubsection{Sum of $2,4,8$ squares}

For $n,k\in\mathbb{N}$ the arithmetic function $r_{k}(n)$ is defined
as the number of ways $n$ can be represented as the sum $k$ squares.
Let 
\begin{align*}
\chi_{1}(n) & =\begin{cases}
0, & n\equiv0\mod4,\\
1 & n\equiv1\mod4,\\
0 & n\equiv2\mod4,\\
-1 & n\equiv3\mod4.
\end{cases}
\end{align*}
 Then
\[
\frac{r_{2}(n)}{4}=\sum_{d\mid n}\chi_{1}(d).
\]
 Hence,
\begin{equation}
\prod_{k=1}^{\infty}\frac{\left(q^{(4k-1)z};q^{4k-1}\right)_{\infty}^{4/(4k-1)}}{\left(q^{(4k-3)z};q^{4k-3}\right)_{\infty}^{4/(4k-3)}}=\exp\left(\sum_{n=1}^{\infty}\frac{r_{2}(n)}{n}\frac{q^{nz}}{1-q^{n}}\right)\label{eq:3.33}
\end{equation}
 and
\begin{equation}
\prod_{k=1}^{\infty}\frac{\left(q^{(4k-1)z};q^{8k-2}\right)_{\infty}^{\frac{4}{4k-1}}\left(q^{(4k-3)(z+1)};q^{8k-6}\right)_{\infty}^{\frac{4}{4k-3}}}{\left(q^{(4k-1)(z+1)};q^{8k-2}\right)_{\infty}^{\frac{4}{4k-1}}\left(q^{(4k-3)z};q^{8k-6}\right)_{\infty}^{\frac{4}{4k-3}}}=\exp\left(\sum_{n=1}^{\infty}\frac{r_{2}(n)q^{nz}}{n(1+q^{n})}\right).\label{eq:3.34}
\end{equation}
Since
\[
\sum_{n=1}^{\infty}\frac{r_{2}(n)}{n^{s}}=4\zeta(s)\beta(s),\quad\Re(s)>1,
\]
where
\[
\beta(s)=\sum_{n=0}^{\infty}\frac{(-1)^{n}}{(2n+1)^{s}},\quad\Re(s)>0
\]
and
\[
\beta(1)=\frac{\pi}{4},\quad\beta(2)=G,
\]
 where $G=0.915\dots$ is the Catalan constant. Then,
\[
\sum_{n=1}^{\infty}\frac{r_{2}(n)}{n^{2}}=\frac{2}{3}\pi^{2}G,\quad\sum_{n=1}^{\infty}\frac{r_{2}(n)}{n}=\infty.
\]
 Hence,
\begin{equation}
\lim_{q\uparrow1}\prod_{k=1}^{\infty}\frac{\left(q^{(4k-1)z};q^{4k-1}\right)_{\infty}^{4(1-q)/(4k-1)}}{\left(q^{(4k-3)z};q^{4k-3}\right)_{\infty}^{4(1-q)/(4k-3)}}=\exp\left(\frac{2}{3}\pi^{2}G\right)\label{eq:3.33-1}
\end{equation}
and
\begin{equation}
\lim_{q\uparrow1}\prod_{k=1}^{\infty}\frac{\left(q^{(4k-1)z};q^{8k-2}\right)_{\infty}^{\frac{4}{4k-1}}\left(q^{(4k-3)(z+1)};q^{8k-6}\right)_{\infty}^{\frac{4}{4k-3}}}{\left(q^{(4k-1)(z+1)};q^{8k-2}\right)_{\infty}^{\frac{4}{4k-1}}\left(q^{(4k-3)z};q^{8k-6}\right)_{\infty}^{\frac{4}{4k-3}}}=\infty.
\end{equation}
From 

\[
\frac{r_{4}(n)}{8}=\sum_{\stackrel{d\mid n}{4\,\nmid\,d}}d
\]
 to get 
\begin{align}
 & \exp\left(-\sum_{n=1}^{\infty}\frac{r_{4}(n)}{8n}\frac{q^{nz}}{1-q^{n}}\right)=\prod_{k=1}^{\infty}\left(q^{(4k-3)z};q^{4k-3}\right)_{\infty}\label{eq:3.35}\\
 & \times\left(q^{(4k-2)z};q^{4k-2}\right)_{\infty}\left(q^{(4k-1)z};q^{4k-1}\right)_{\infty}\nonumber 
\end{align}
and
\begin{align}
 & \exp\left(\sum_{n=1}^{\infty}\frac{r_{4}(n)}{8n}\frac{q^{nz}}{1+q^{n}}\right)=\prod_{k=1}^{\infty}\frac{\left(q^{(4k-1)(z+1)};q^{8k-2}\right)_{\infty}}{\left(q^{(4k-1)z};q^{8k-2}\right)_{\infty}}\label{eq:3.36}\\
 & \times\frac{\left(q^{(4k-2)(z+1)};q^{8k-4}\right)_{\infty}}{\left(q^{(4k-2)z};q^{8k-4}\right)_{\infty}}\frac{\left(q^{(4k-3)(z+1)};q^{8k-6}\right)_{\infty}}{\left(q^{(4k-3)z};q^{8k-6}\right)_{\infty}}.\nonumber 
\end{align}
From 
\[
r_{4}(n)=8(2+(-1)^{n})\sum_{\stackrel{d\mid n}{2\,\nmid\,d}}d
\]
 to get
\begin{equation}
\prod_{k=1}^{\infty}\left(q^{(2k-1)z};q^{2k-1}\right)_{\infty}=\exp\left(-\sum_{n=1}^{\infty}\frac{r_{4}(n)}{8n(2+(-1)^{n})}\frac{q^{nz}}{1-q^{n}}\right)\label{eq:3.37}
\end{equation}
and
\begin{equation}
\prod_{k=1}^{\infty}\frac{\left(q^{(2k-1)(z+1)};q^{4k-2}\right)_{\infty}}{\left(q^{(2k-1)z};q^{4k-2}\right)_{\infty}}=\exp\left(\sum_{n=1}^{\infty}\frac{r_{4}(n)}{8n(2+(-1)^{n})}\frac{q^{nz}}{1+q^{n}}\right).\label{eq:3.38}
\end{equation}
From
\[
\left(-1\right)^{n}r_{8}(n)=\sum_{d\vert n}\left(-1\right)^{d}d^{3},
\]
 we get
\begin{equation}
\prod_{n=1}^{\infty}\frac{\left(q^{(2n-1)z};q^{2n-1}\right)_{\infty}^{(2n-1)^{2}}}{\left(q^{2nz};q^{2n}\right)_{\infty}^{4n^{2}}}=\exp\left(-\sum_{n=1}^{\infty}\frac{r_{8}(n)}{n}\frac{(-1)^{n}q^{nz}}{1-q^{n}}\right)\label{eq:3.39}
\end{equation}
 and
\begin{equation}
\prod_{n=1}^{\infty}\frac{\left(q^{2n(z+1)};q^{4n}\right)_{\infty}^{4n^{2}}\left(q^{(2n-1)z};q^{4n-2}\right)_{\infty}^{(2n-1)^{2}}}{\left(q^{2nz};q^{4n}\right)_{\infty}^{4n^{2}}\left(q^{(2n-1)(z+1)};q^{4n-2}\right)_{\infty}^{(2n-1)^{2}}}=\exp\left(\sum_{n=1}^{\infty}\frac{r_{8}(n)}{n}\frac{(-1)^{n}q^{nz}}{1+q^{n}}\right).\label{eq:3.40}
\end{equation}

\subsubsection{The core function}

The core function $\gamma(n)$ is defined by \cite{McCarthy}
\[
\gamma(n)=\begin{cases}
1, & n=1\\
p_{1}\cdots p_{t}, & n=p_{1}^{\alpha_{1}}\cdots p_{t}^{\alpha_{t}}
\end{cases}.
\]
 then $\gamma(n)$ is multiplicative and 
\[
\gamma(n)=\sum_{d\vert n}\varphi(d)\left|\mu(d)\right|.
\]
 Then, 
\begin{equation}
\prod_{n=1}^{\infty}\left(q^{nz};q^{n}\right)_{\infty}^{\varphi(n)\left|\mu(n)\right|/n}=\exp\left(-\sum_{n=1}^{\infty}\frac{\gamma(n)}{n}\frac{q^{nz}}{1-q^{n}}\right)\label{eq:3.41}
\end{equation}
and
\begin{equation}
\prod_{n=1}^{\infty}\left(\frac{\left(q^{n(z+1)};q^{2n}\right)_{\infty}}{\left(q^{nz};q^{2n}\right)_{\infty}}\right)^{\varphi(n)\left|\mu(n)\right|/n}=\exp\left(\sum_{n=1}^{\infty}\frac{\gamma(n)}{n}\frac{q^{nz}}{1+q^{n}}\right).\label{eq:3.42}
\end{equation}

\subsubsection{M\"{o}bius function $\mu_{k}(n)$ }

For $k\in\mathbb{N}$, the generalized M\"{o}bius function $\mu_{k}(n)$
is defined by 
\[
\mu_{k}(n)=\begin{cases}
0, & \text{if \ensuremath{n} contains a squared factor \ensuremath{>1}}\\
\exp\left(\frac{\pi i\omega(n)}{k}\right), & \text{otherwise}
\end{cases}.
\]
Then,
\[
\sum_{d\vert n}\mu_{k}(d)=\begin{cases}
1, & n=1\\
\left(1+\exp\left(\frac{\pi i}{k}\right)\right)^{\omega(n)}, & n>1
\end{cases}.
\]
Thus,
\begin{equation}
\prod_{n=1}^{\infty}\left(q^{nz};q^{n}\right)_{\infty}^{\mu_{k}(n)/n}=\exp\left(-\frac{q^{z}}{1-q}-\sum_{n=2}^{\infty}\frac{\left(1+e^{\pi i/k}\right)^{\omega(n)}q^{nz}}{n\left(1-q^{n}\right)}\right)\label{eq:3.43}
\end{equation}
 and
\begin{equation}
\prod_{n=1}^{\infty}\left(\frac{\left(q^{n(z+1)};q^{2n}\right)_{\infty}}{\left(q^{nz};q^{2n}\right)_{\infty}}\right)^{\mu_{k}(n)/n}=\exp\left(\frac{q^{z}}{1+q}+\sum_{n=2}^{\infty}\frac{\left(1+e^{\pi i/k}\right)^{\omega(n)}q^{nz}}{n\left(1+q^{n}\right)}\right).\label{eq:3.44}
\end{equation}
 Since 
\[
\sum_{n=2}^{\infty}\left|\frac{\left(1+e^{\pi i/k}\right)^{\omega(n)}}{n^{2}}\right|\le\sum_{n=2}^{\infty}\frac{2^{\omega(n)}}{n^{2}}<\frac{5}{2},
\]
then 
\begin{equation}
\lim_{q\uparrow1}\prod_{n=1}^{\infty}\left(q^{nz};q^{n}\right)_{\infty}^{(1-q)\mu_{k}(n)/n}=\exp\left(-1-\sum_{n=2}^{\infty}\frac{\left(1+e^{\pi i/k}\right)^{\omega(n)}}{n^{2}}\right).\label{eq:3.43a}
\end{equation}

\end{document}